\documentclass[11pt,a4paper]{article}

\usepackage[T1]{fontenc}
\usepackage[utf8]{inputenc}
\usepackage[english]{babel}
\usepackage{lmodern}
\usepackage{microtype}
\usepackage{geometry}
\usepackage{amsmath,amssymb,amsthm,mathtools}
\usepackage{graphicx}
\usepackage{booktabs,longtable,array}
\usepackage{enumitem}
\usepackage[numbers,sort&compress]{natbib}
\usepackage{xcolor}
\usepackage{hyperref}
\hypersetup{colorlinks=true,linkcolor=blue!60!black,citecolor=blue!60!black,urlcolor=blue!60!black}
\usepackage[nameinlink,noabbrev]{cleveref}

\graphicspath{{figures/}}
\setlist[itemize]{leftmargin=2em}
\setlist[enumerate]{leftmargin=2em}
\newcommand{\ZZ}{\mathbb Z}

\newcommand{\Ber}{\mathcal B}
\newcommand{\diag}{\operatorname{diag}}
\newcommand{\lcm}{\operatorname{lcm}}

\newcommand{\tr}{\operatorname{tr}}

\theoremstyle{definition}
\newtheorem{definition}{Definition}[section]
\newtheorem{remark}[definition]{Remark}

\theoremstyle{plain}
\newtheorem{proposition}[definition]{Proposition}
\newtheorem{theorem}[definition]{Theorem}
\newtheorem{corollary}[definition]{Corollary}

\title{Star-Shaped Integral Cartan-Type Matrices and an Egyptian-Fraction Classification of Affine Weighted Trees}
\author{E. Torrente-Lujan\\
\small Grupo de F\'isica Te\'orica, Departamento de F\'isica, Universidad de Murcia, Spain\\
\small \texttt{e.torrente@cern.ch}}
\date{Matrix/graph oriented revision, \today}

\begin{document}
\maketitle

\begin{abstract}
We study a concrete family of symmetric integral $Z$-matrices attached to weighted star trees.  The arms are ordinary type-$A$ chains and the central diagonal entry is an arbitrary positive integer $k$ rather than being fixed to the Cartan value $2$.  This gives a matrix-theoretic and graph-theoretic version of the so called Berger construction: it extends the simply laced affine Dynkin stars while remaining accessible through elementary linear algebra.  For a star with arm lengths $r_1,\ldots,r_m$ we compute the determinant, the inertia, the positive-definite and affine regimes, and the primitive positive null vector in the affine case.  The affine condition is exactly the unit-fraction equation
\[
   \sum_{i=1}^m \frac{1}{r_i+1}=m-k,
\]
so the classification of these affine weighted trees reduces to a finite Egyptian-fraction enumeration for each fixed pair $(m,k)$.  The classical affine diagrams $D_4^{(1)}$, $E_6^{(1)}$, $E_7^{(1)}$ and $E_8^{(1)}$ appear as small subfamilies, while higher-arm cases give new integral positive-semidefinite star matrices with explicit Coxeter labels.  
\end{abstract}

\noindent\textbf{Keywords.} Integral symmetric matrices; weighted graphs; star trees; generalized Cartan matrices; affine Dynkin diagrams; Schur complements; graph spectra; Egyptian fractions.\\
\textbf{MSC 2020.} 05C50; 15A18; 15B36; 11D68; 17B67.

\section{Introduction}

Dynkin diagrams sit at a meeting point of linear algebra, graph theory, Lie theory and geometry.  In the simply laced case a diagram is a graph $G$ and the associated Cartan matrix is $2I-A(G)$, where $A(G)$ is the adjacency matrix.  Positivity and semipositivity of this matrix are equivalent to the familiar finite and affine ADE classifications.  From the graph-theoretic side this is closely related to Smith's classification of connected graphs with adjacency spectral radius at most $2$, later developed in the spectral graph theory literature \citep{Smith1970,Hoffman1972,HoffmanSmith1975,Cvetkovic1980,BrouwerHaemers2012,GodsilRoyle2001}.  From the matrix side, the same theme appears in combinatorial matrix theory and in the classification of integer symmetric, signed and symmetrizable matrices whose eigenvalues lie in prescribed intervals \citep{Bapat2010,BrualdiCvetkovic2008,McKeeSmyth2007,McKeeSmyth2020}.

The purpose of this paper is to place the so called Berger-type diagrams into this matrix/graph framework.  There is also a nearby representation-theoretic tradition in which graphs, quadratic forms and Cartan-type matrices control finite or tame classification problems \citep{Gabriel1972,Kac1980,Ringel1984,GeigleLenzing1987,FominZelevinsky2003,BarotGeissZelevinsky2006}.  Rather than treating them first as hypothetical new Lie-theoretic objects, we regard them as explicit integral symmetric $Z$-matrices: the off-diagonal entries are non-positive, the underlying graph is a tree, and some diagonal entries are allowed to differ from $2$.  The family considered here is the most tractable one: a weighted star tree whose arms are ordinary type-$A$ chains and whose central vertex has diagonal weight $k\in\ZZ_{>0}$.  Thus the associated matrix is a weighted analogue of $2I-A(G)$, with only one non-standard diagonal entry.  The positive and null labellings appearing below are also close to Vinberg's additive and subadditive functions on graphs, their use in the representation theory of Euclidean diagrams, and recent classifications of quivers with additive or subadditive labellings \citep{Vinberg1971,HappelPreiserRingel1980,GalashinPylyavskyy2019,GalashinPylyavskyy2020}.

This viewpoint has two advantages.  First, it separates rigorous linear-algebraic statements from broader geometric motivation.  The determinant, inertia, affine condition and null vector are consequences of Schur complements and elementary properties of the $A_r$ Cartan matrices.  Secondly, it turns the affine classification into a precise combinatorial problem: after setting $N_i=r_i+1$, affine matrices are exactly the solutions of
\begin{equation}
        \frac1{N_1}+\cdots+\frac1{N_m}=m-k,
        \qquad N_i\ge 2.
\label{eq:intro-egyptian}
\end{equation}
Consequently the enumeration of affine weighted stars is an Egyptian-fraction enumeration with a graph-theoretic interpretation.

There are also close representation-theoretic analogues of this matrix/graph viewpoint.  Gabriel's theorem and Kac's extension relate graph supports, quadratic forms and indecomposable quiver representations to Dynkin and Kac--Moody root systems \citep{Gabriel1972,Kac1980}.  Weighted projective lines and canonical algebras use star-shaped diagrams whose weight sequences are governed by expressions such as $2-m+\sum_i 1/p_i$, separating domestic, tubular and wild regimes \citep{Ringel1984,GeigleLenzing1987}.  Cluster finite-type classifications again connect matrices, graphs and Cartan--Killing types through mutation and positive quasi-Cartan data \citep{FominZelevinsky2003,BarotGeissZelevinsky2006}.  These works are not prerequisites for the proofs below, but they identify nearby mathematical literatures for the present weighted-star classification.

The results below may be read as a controlled extension of the finite/affine Dynkin story.  The ordinary affine stars are recovered in small cases: $D_4^{(1)}$ arises from four arms of length one and central weight $2$, while $E_6^{(1)}$, $E_7^{(1)}$ and $E_8^{(1)}$ arise from three arms and central weight $2$.  The larger families do not automatically define new Kac--Moody algebras; they are classified positive-semidefinite integral star matrices with prescribed sign pattern and a primitive positive kernel vector.

The paper is organized as follows.  \Cref{sec:framework} fixes the matrix and graph conventions.  \Cref{sec:star} defines the star matrices.  \Cref{sec:det-inertia} proves the determinant and inertia formulae and extracts the affine condition.  \Cref{sec:classification} gives the Diophantine classification algorithm.  \Cref{sec:labels} computes the Coxeter labels.  \Cref{sec:enumeration} lists the first families, including the classical affine diagrams and higher-arm Berger-type weighted stars.

\section{Weighted star matrices and graph convention}
\label{sec:framework}

A real matrix with non-positive off-diagonal entries is often called a $Z$-matrix; positive-definite or singular positive-semidefinite examples belong to the general circle of Stieltjes and $M$-matrix theory \citep{BermanPlemmons1994,HornJohnson2013}.  The matrices in this paper are integral symmetric $Z$-matrices with a prescribed tree support.

\begin{definition}[Cartan-type weighted graph matrix]
Let $G=(V,E)$ be a finite simple graph and let $w:V\to\ZZ_{>0}$ be a vertex-weight function.  The associated symmetric matrix is
\begin{equation}
        B(G,w)_{uv}=\begin{cases}
        w(u), & u=v,\\
        -1, & \{u,v\}\in E,\\
        0, & \text{otherwise.}
        \end{cases}
\label{eq:weighted-graph-matrix}
\end{equation}
If $B(G,w)$ is positive definite, we call the weighted graph finite type.  If $B(G,w)$ is positive semidefinite of corank one and every proper principal submatrix is positive definite, we call it affine type.
\end{definition}

For $w\equiv 2$, this is the usual simply laced generalized Cartan matrix.  The present paper allows $w$ to differ from $2$ at the central vertex of a star.  The term \emph{Berger matrix} is retained as historical terminology for such weighted Cartan-type matrices, but the main results are stated entirely in terms of \eqref{eq:weighted-graph-matrix}.

If $B$ is affine type, Perron--Frobenius theory applied to the irreducible non-negative matrix obtained by shifting $B$ by a large scalar multiple of the identity gives a one-dimensional kernel generated by a strictly positive vector.  We call the primitive integral generator the vector of \emph{Coxeter labels}; its coordinate sum is the corresponding \emph{Coxeter number}.  In graph language this is an additive labelling of the weighted tree.  This terminology matches the usual affine Dynkin convention, but no representation-theoretic construction is assumed.

\section{The star-shaped family}
\label{sec:star}

Let $A_r$ denote the Cartan matrix of type $A_r$,
\[
A_r=\begin{pmatrix}
2&-1&0&\cdots&0\\
-1&2&-1&\ddots&\vdots\\
0&-1&2&\ddots&0\\
\vdots&\ddots&\ddots&\ddots&-1\\
0&\cdots&0&-1&2
\end{pmatrix}.
\]
Fix $m\geq 2$, positive integers $r_1,\ldots,r_m$, and a central weight $k\in\ZZ_{>0}$.  Let $v_i\in\ZZ^{r_i}$ be the column vector with all entries zero except the last entry, which is $-1$.  The star-shaped matrix studied in this paper is
\begin{equation}
B(k;r_1,\ldots,r_m)=
\begin{pmatrix}
A_{r_1} &0&\cdots&0&v_1\\
0&A_{r_2}&\cdots&0&v_2\\
\vdots&\vdots&\ddots&\vdots&\vdots\\
0&0&\cdots&A_{r_m}&v_m\\
v_1^t&v_2^t&\cdots&v_m^t&k
\end{pmatrix}.
\label{eq:star-matrix}
\end{equation}
The underlying graph is a tree with $m$ linear arms attached to one central vertex.  Its dimension is
\begin{equation}
D=1+\sum_{i=1}^m r_i,
\label{eq:dimension}
\end{equation}
and its trace is
\begin{equation}
\tr B=2\sum_{i=1}^m r_i+k.
\end{equation}
The sum of all matrix entries is
\begin{equation}
K^2(B):=\sum_{a,b}B_{ab}=k.
\label{eq:K-square}
\end{equation}
In the previous geometric motivation this number was interpreted as an intersection-theoretic invariant.  In the present matrix/graph treatment it is simply the total entry sum of the weighted star matrix.

\section{Determinant, inertia and affine condition}
\label{sec:det-inertia}

\begin{proposition}[Determinant]
For the matrix \eqref{eq:star-matrix},
\begin{equation}
\det B(k;r_1,\ldots,r_m)=
\prod_{i=1}^m (r_i+1)\left(k-m+\sum_{i=1}^m \frac{1}{r_i+1}\right).
\label{eq:determinant}
\end{equation}
\end{proposition}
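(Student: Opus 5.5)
The plan is a Schur complement computation with respect to the block-diagonal arm part. Write $B=B(k;r_1,\ldots,r_m)$ in the block form
\[
B=\begin{pmatrix} A & v\\ v^t & k\end{pmatrix},\qquad A=\diag(A_{r_1},\ldots,A_{r_m}),\quad v=\begin{pmatrix}v_1\\ \vdots\\ v_m\end{pmatrix}.
\]
Each $A_r$ is invertible with $\det A_r=r+1$, so $A$ is invertible. The Schur complement formula then gives
\[
\det B=\det A\,\bigl(k-v^tA^{-1}v\bigr)=\prod_{i=1}^m (r_i+1)\,\Bigl(k-\sum_{i=1}^m v_i^tA_{r_i}^{-1}v_i\Bigr).
\]
The second equality holds because $A^{-1}$ is block diagonal. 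It therefore suffices to show that $v_i^tA_{r_i}^{-1}v_i=1-\frac{1}{r_i+1}=\frac{r_i}{r_i+1}$. Then $k-\sum_i\bigl(1-\frac1{r_i+1}\bigr)=k-m+\sum_i\frac1{r_i+1}$, which is exactly \eqref{eq:determinant}.

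Since $v_i=-e_{r_i}$, the quantity $v_i^tA_{r_i}^{-1}v_i$ is simply the bottom-right entry $(A_{r_i}^{-1})_{r_ir_i}$. By the cofactor formula this entry equals $\det A_{r_i-1}/\det A_{r_i}$, the leading principal minor of size $r_i-1$ divided by the full determinant.

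So the only ingredient to establish is $\det A_r=r+1$, with the convention $\det A_0=1$. I would prove this by expanding along the last row, which gives the recursion $\det A_r=2\det A_{r-1}-\det A_{r-2}$ with initial values $\det A_0=1$ and $\det A_1=2$; induction then yields $r+1$. Hence $(A_r^{-1})_{rr}=r/(r+1)$, as required.

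There is no real obstacle here. The one point needing care is the sign and placement of $v_i$: the arm attaches through its last vertex, so it is the $(r,r)$ entry of the inverse that matters, and the sign of $v_i$ drops out of the quadratic form. As an alternative check, one can expand $\det B$ along the central row and column, which yields the same formula as $k\prod_i(r_i+1)-\sum_i r_i\prod_{j\ne i}(r_j+1)$.
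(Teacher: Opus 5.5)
Your proposal is correct and follows the paper's own proof: a Schur complement with respect to $\diag(A_{r_1},\ldots,A_{r_m})$, using $\det A_r=r+1$ and $(A_r^{-1})_{rr}=r/(r+1)$. The only difference is that you also prove these two identities (via the three-term determinant recursion and the cofactor formula), whereas the paper simply cites them as standard.
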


\begin{proof}
Use the Schur complement with respect to the block diagonal matrix $\diag(A_{r_1},\ldots,A_{r_m})$.  The standard identities
\[
\det A_r=r+1,\qquad (A_r^{-1})_{rr}=\frac{r}{r+1}
\]
give
\begin{align*}
\det B
&=\prod_{i=1}^m \det A_{r_i}\left(k-\sum_{i=1}^m v_i^t A_{r_i}^{-1}v_i\right)\\
&=\prod_{i=1}^m (r_i+1)\left(k-\sum_{i=1}^m\frac{r_i}{r_i+1}\right)\\
&=\prod_{i=1}^m (r_i+1)\left(k-m+\sum_{i=1}^m\frac{1}{r_i+1}\right).
\end{align*}
\end{proof}

The determinant is only the first consequence of the same calculation.  The full signature is controlled by a scalar Schur complement.

\begin{theorem}[Inertia of the weighted star]
Let
\begin{equation}
        S(k;r_1,\ldots,r_m)=k-m+\sum_{i=1}^m\frac{1}{r_i+1}.
\label{eq:schur-scalar}
\end{equation}
Then the inertia of $B(k;r_1,\ldots,r_m)$ is the inertia of the positive-definite block $\diag(A_{r_1},\ldots,A_{r_m})$ plus the inertia of the scalar $S$.  Hence:
\begin{enumerate}[label=\textup{(\roman*)}]
\item $B$ is positive definite if and only if $S>0$;
\item $B$ is positive semidefinite of corank one if and only if $S=0$;
\item $B$ has exactly one negative eigenvalue if and only if $S<0$.
\end{enumerate}
Moreover, in the case $S=0$, every proper principal submatrix is positive definite.
\end{theorem}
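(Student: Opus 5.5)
The plan is to prove the inertia statement by a congruence transformation followed by Sylvester's law of inertia. Write $B=\begin{pmatrix} M & v\\ v^t & k\end{pmatrix}$ with $M=\diag(A_{r_1},\ldots,A_{r_m})$ and $v=(v_1,\ldots,v_m)^t$. Since each $A_r$ is positive definite (all leading minors equal $j+1>0$), $M$ is invertible and positive definite. Set
\[
P=\begin{pmatrix} I & -M^{-1}v\\ 0 & 1\end{pmatrix},
\]
so that $P^tBP=\diag(M,\;k-v^tM^{-1}v)$. The proof of the Determinant proposition already shows $k-v^tM^{-1}v=S$, using $(A_r^{-1})_{rr}=r/(r+1)$. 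Sylvester's law then gives that the inertia of $B$ equals the inertia of $M$, namely $(D-1,0,0)$, plus the inertia of the scalar $S$.

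Items (i)--(iii) follow by reading off the sign of $S$. If $S>0$, all $D$ eigenvalues are positive. If $S=0$, there are $D-1$ positive eigenvalues and one zero eigenvalue, so $B$ is positive semidefinite of corank one. If $S<0$, there is exactly one negative eigenvalue. Each converse holds because the three cases are mutually exclusive and exhaustive.

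The final claim needs more work. Any proper principal submatrix is contained in a principal submatrix obtained by deleting a single vertex, and principal submatrices of positive definite matrices are positive definite. It therefore suffices to treat the deletion of one vertex.

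\textbf{Deleting the centre.} This leaves $M$, which is positive definite.

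\textbf{Deleting a vertex on arm $i$.} Arm $i$ splits into a piece of length $s<r_i$ still attached to the centre and a detached $A$-chain. The detached chain is a positive definite direct summand. The attached part is the star $B(k;r_1,\ldots,s,\ldots,r_m)$, where $s=0$ means the arm disappears. For $s=0$, the arm's Schur contribution $r_i/(r_i+1)$ is simply dropped; this agrees with the formula $s/(s+1)$ at $s=0$, so the $s=0$ case needs no separate treatment. Since $x\mapsto 1/(x+1)$ is strictly decreasing, the new Schur scalar satisfies
\[
S'=S+\frac{1}{s+1}-\frac{1}{r_i+1}>S=0.
\]
Part (i) then shows the attached star is positive definite. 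The direct sum of the two pieces is positive definite, and so is every further principal submatrix.

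The main obstacle is this bookkeeping in the last step. One must handle the disconnected pieces and the degenerate case $s=0$ uniformly, and recognise each attached piece as a star matrix of the same form so that part (i) applies. The congruence argument itself is routine.
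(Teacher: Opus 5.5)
Your proof is correct and follows the paper's route: the explicit congruence $P^tBP=\diag(M,S)$ with Sylvester's law is exactly the Haynsworth inertia additivity the paper cites, and your argument for the final claim is the paper's monotonicity argument, namely that a shortened attached arm contributes $\ell/(\ell+1)<r_i/(r_i+1)$ to the Schur scalar. The differences are only presentational: you reduce to single-vertex deletions and spell out the detached chain and the $s=0$ case, whereas the paper treats them implicitly by letting several arms shrink at once.
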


\begin{proof}
The Haynsworth inertia additivity formula for Schur complements gives the first assertion because each $A_{r_i}$ is positive definite.  The three cases follow immediately.  For the final statement, if the central vertex is deleted then the remaining matrix is a direct sum of positive-definite path Cartan matrices.  If the central vertex remains but a vertex is deleted from an arm, then the part of that arm still attached to the central vertex has length $\ell_i\le r_i$, with at least one strict inequality.  The corresponding Schur contribution is $\ell_i/(\ell_i+1)<r_i/(r_i+1)$ for the shortened arm, so the scalar Schur complement becomes strictly positive when the original value was zero.
\end{proof}

\begin{corollary}[Affine condition]
The matrix \eqref{eq:star-matrix} is affine type precisely when
\begin{equation}
\sum_{i=1}^m \frac{1}{r_i+1}=m-k.
\label{eq:affine-condition}
\end{equation}
In particular, if $p=m-k$, then affine weighted stars are equivalent to unit-fraction representations of the integer $p$ by $m$ denominators $r_i+1\ge2$.
\end{corollary}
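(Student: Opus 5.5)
The corollary follows from the inertia theorem once the definition of affine type is unpacked. By the definition in \Cref{sec:framework}, $B=B(k;r_1,\ldots,r_m)$ is affine type when two conditions hold:
\begin{enumerate}[label=\textup{(\alph*)}]
\item $B$ is positive semidefinite of corank one;
\item every proper principal submatrix of $B$ is positive definite.
\end{enumerate}
The plan is to show that (a) alone is equivalent to $S(k;r_1,\ldots,r_m)=0$, and that (b) then comes for free.

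\emph{Step 1: the forward direction.} Suppose $B$ is affine type. Then (a) holds, so by part (ii) of the inertia theorem $S=0$. Written out, this is $k-m+\sum_i 1/(r_i+1)=0$, which rearranges to \eqref{eq:affine-condition}.

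\emph{Step 2: the converse.} Suppose \eqref{eq:affine-condition} holds, so $S=0$. Part (ii) of the inertia theorem gives (a). The final clause of the same theorem says that when $S=0$ every proper principal submatrix is positive definite, which is (b). Hence $B$ is affine type. The two directions together prove the equivalence.

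\emph{Step 3: the reformulation.} Set $N_i=r_i+1$ and $p=m-k$. Since each $r_i\ge 1$, the map $r_i\mapsto N_i$ is a bijection from positive integers onto integers $N_i\ge 2$. So the affine tuples $(k;r_1,\ldots,r_m)$ correspond exactly to representations
\[
\sum_{i=1}^m \frac{1}{N_i}=p,\qquad N_i\ge 2,
\]
of the integer $p$ as a sum of $m$ unit fractions. Because $k\in\ZZ$, $p$ is automatically an integer. Since $0<\sum_i 1/N_i\le m/2$, it also satisfies $1\le p\le m/2$, which explains why the enumeration is finite for each fixed pair $(m,k)$.

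The only point needing care is the final clause of the inertia theorem. It should apply both when the central vertex is deleted and when a vertex is deleted from an arm. In the latter case the component containing the centre has Schur scalar strictly larger than $0$. The components split off from the arm are themselves $A_\ell$ blocks and hence positive definite. Once this case analysis is taken from the theorem, the corollary is a direct consequence, so I expect no real obstacle.
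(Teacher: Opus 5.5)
Your proof is correct and follows the paper's route: the paper gives no separate proof, reading the corollary off part (ii) of the inertia theorem and its final clause on proper principal submatrices, with the reformulation being the substitution $N_i=r_i+1$. One aside is misattributed: the bound $1\le p\le m/2$ only limits which central weights $k$ can occur for a given $m$, whereas finiteness of the solution set for a fixed pair $(m,k)$ comes from the recursive denominator bounds \eqref{eq:egyptian-bounds}; the corollary does not need this in any case.
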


For $k\geq m$, the determinant is positive and the matrix is positive definite.  Affine examples require $p=m-k>0$.  Since $r_i\geq1$, each summand in \eqref{eq:affine-condition} is at most $1/2$, and therefore $p\le m/2$.  The main family in the original manuscript is $p=1$:
\begin{equation}
\sum_{i=1}^m \frac{1}{r_i+1}=1.
\label{eq:egyptian-one}
\end{equation}
The next family is $p=2$:
\begin{equation}
\sum_{i=1}^m \frac{1}{r_i+1}=2.
\label{eq:egyptian-two}
\end{equation}

\section{Combinatorial classification by unit fractions}
\label{sec:classification}

Set
\begin{equation}
        N_i=r_i+1\qquad (N_i\ge2), \qquad p=m-k.
\end{equation}
After sorting the denominators, the affine classification problem becomes
\begin{equation}
        2\le N_1\le\cdots\le N_m, \qquad
        \sum_{i=1}^m \frac1{N_i}=p.
\label{eq:unit-fraction-classification}
\end{equation}
For fixed $(m,p)$ this is a finite search.  A standard recursive enumeration uses exact rational arithmetic.  Suppose that after choosing $N_1,\ldots,N_{j-1}$ the residual is $\rho>0$ and $t=m-j+1$ terms remain.  The next denominator $N_j$ must satisfy
\begin{equation}
        \left\lceil\frac1\rho\right\rceil \le N_j \le
        \left\lfloor\frac{t}{\rho}\right\rfloor,
\label{eq:egyptian-bounds}
\end{equation}
with the additional monotonicity condition $N_j\ge N_{j-1}$.  This gives a complete depth-first enumeration of \eqref{eq:unit-fraction-classification}; it is the same elementary search principle used in classical and algorithmic work on finite sums of unit fractions \citep{Kellogg1921,Curtiss1922,Graham1964,Eppstein1995,Sandor2003,Bloom2022}.

The table below records the exact number of unordered solutions for the small cases used later.
\begin{center}
\begin{tabular}{@{}r r r r@{}}
\toprule
$m$ & $p$ & $k=m-p$ & number of unordered affine stars\\
\midrule
2 & 1 & 1 & 1\\
3 & 1 & 2 & 3\\
4 & 1 & 3 & 14\\
5 & 1 & 4 & 147\\
6 & 1 & 5 & 3462\\
4 & 2 & 2 & 1\\
5 & 2 & 3 & 3\\
6 & 2 & 4 & 17\\
\bottomrule
\end{tabular}
\end{center}

\begin{remark}[Relation with spectral graph theory]
When all diagonal weights are $2$, positivity of $2I-A(G)$ is equivalent to $\rho(A(G))<2$ and affine semipositivity is equivalent to $\rho(A(G))=2$.  The classical simply laced Dynkin and affine Dynkin diagrams are exactly the connected graphs in this spectral regime.  

Small spectral-radius classification problems also have higher-dimensional and hypergraph variants \citep{LuMan2016}.  The present family changes the diagonal at one vertex, so it is not a direct instance of Smith's theorem; rather, it is a one-parameter weighted analogue for star trees, with the scalar Schur complement replacing the global spectral-radius test.
\end{remark}

\section{Coxeter labels}
\label{sec:labels}

\begin{proposition}[Coxeter labels]
Assume \eqref{eq:affine-condition}.  Put
\begin{equation}
N_i=r_i+1,
\qquad s=\lcm(N_1,\ldots,N_m).
\end{equation}
Then a primitive positive kernel vector of $B$ is given, up to a common divisor, by the central label $s$ and by the arm labels
\begin{equation}
 c_{i,j}=\frac{s}{r_i+1}\,j,
 \qquad j=1,\ldots,r_i.
\label{eq:coxeter-labels}
\end{equation}
The corresponding Coxeter number is
\begin{equation}
 h=s+\sum_{i=1}^m\sum_{j=1}^{r_i}c_{i,j}
   =s\left(1+\frac12\sum_{i=1}^m r_i\right)
   =\frac{s}{2}(D+1).
\label{eq:coxeter-number}
\end{equation}
\end{proposition}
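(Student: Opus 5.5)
We verify directly that the proposed vector $c$ lies in the kernel of $B$, then invoke the affine-type conclusion of the inertia theorem for one-dimensionality and positivity, and finally sum the labels. Order the coordinates of arm $i$ so that $j=1$ is the leaf and $j=r_i$ is the vertex adjacent to the centre; this matches \eqref{eq:star-matrix}, where $v_i$ has its nonzero entry in the last position. Put $c_{i,0}=0$ and identify $c_{i,r_i+1}$ with the central label $s$. Since $s/N_i\cdot(r_i+1)=s$, the formula $c_{i,j}=\frac{s}{N_i}j$ extends consistently to $j=0$ and $j=r_i+1$.

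\textbf{Arm rows.} The row of $B$ at arm vertex $(i,j)$ gives
\[
(Bc)_{i,j}=-c_{i,j-1}+2c_{i,j}-c_{i,j+1}.
\]
This is the second difference of a linear function of $j$ on $0\le j\le r_i+1$, so it vanishes.

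\textbf{Central row.} Here
\[
(Bc)_{\mathrm{centre}}=ks-\sum_i c_{i,r_i}=ks-\sum_i \frac{s\,r_i}{r_i+1}=s\Bigl(k-m+\sum_i\frac1{r_i+1}\Bigr)=s\,S,
\]
which is zero by \eqref{eq:affine-condition}. So $c$ is a strictly positive integral kernel vector.

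\textbf{Primitivity.} By the inertia theorem, when $S=0$ the kernel is one-dimensional. Hence the primitive generator is $c/g$, where $g=\gcd$ of all entries; this is the meaning of "up to a common divisor" in the statement. We can in fact show $g=1$. Every entry is a multiple of $s/N_i$ for some $i$, and the labels $c_{i,1}=s/N_i$ together with $s$ already have gcd $\gcd_i(s/N_i)$. This gcd is $1$ because $s=\lcm(N_i)$: for each prime $q$ dividing $s$, some $N_i$ carries the full power of $q$ in $s$, so $q\nmid s/N_i$.

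\textbf{Coxeter number.} Summing the arithmetic progressions gives
\[
\sum_{j=1}^{r_i}c_{i,j}=\frac{s}{r_i+1}\cdot\frac{r_i(r_i+1)}2=\frac{s\,r_i}{2}.
\]
Adding the central label,
\[
h=s\Bigl(1+\tfrac12\sum_i r_i\Bigr)=\tfrac s2\bigl(2+(D-1)\bigr)=\tfrac s2(D+1),
\]
using \eqref{eq:dimension}.

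The only delicate point is bookkeeping: fixing the orientation of the arm indices consistently with \eqref{eq:star-matrix}, and making sure the boundary values $c_{i,0}=0$ and $c_{i,r_i+1}=s$ are justified. The rest is routine.
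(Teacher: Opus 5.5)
Your proof is correct and follows the paper's route: the arm rows are handled by the identity $A_r(1,2,\ldots,r)^t=(r+1)(0,\ldots,0,1)^t$, which your second-difference argument restates, the central row reduces to $sS=0$, and $h$ comes from summing the arithmetic progressions. Your gcd argument adds something the paper does not prove: it shows the vector is already primitive, sharpening the paper's hedge ``up to a common divisor''.
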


\begin{proof}
Writing a kernel vector in block form $c^t=(c_1,\ldots,c_m,s)$ gives
\[
A_{r_i}c_i+s v_i=0,
\qquad
\sum_{i=1}^m v_i^t c_i+k s=0.
\]
The identity
\[
A_r(1,2,\ldots,r)^t=(r+1)(0,\ldots,0,1)^t
\]
solves the first equation and gives \eqref{eq:coxeter-labels}.  The last equation is exactly \eqref{eq:affine-condition}.  Summing the labels gives \eqref{eq:coxeter-number}.
\end{proof}

It is useful to multiply \eqref{eq:affine-condition} by $s$.  With
\begin{equation}
        x_i=\frac{s}{r_i+1},
\end{equation}
the affine condition becomes
\begin{equation}
        \sum_{i=1}^m x_i=p s,
        \qquad p=m-k,
\label{eq:divisor-partition}
\end{equation}
and each $x_i$ divides $s$.  The tables below display each solution in the form $(x_1,\ldots,x_m)[s]$.

\section{Enumeration of the first affine families}
\label{sec:enumeration}

We write
\begin{equation}
        \Ber^{(p)}(r_1,\ldots,r_m),
        \qquad p=m-k,
\end{equation}
for the affine weighted star determined by \eqref{eq:affine-condition}.  In the ordinary simply laced affine case $k=2$, four classical stars occur in this notation:
\begin{align*}
D_4^{(1)} &\simeq \Ber^{(2)}(1,1,1,1),\\
E_6^{(1)} &\simeq \Ber^{(1)}(2,2,2),\\
E_7^{(1)} &\simeq \Ber^{(1)}(1,3,3),\\
E_8^{(1)} &\simeq \Ber^{(1)}(1,2,5).
\end{align*}
The remaining rows are weighted-star extensions in which the central diagonal value is not necessarily $2$.  

The counts are for unordered solutions, i.e. after sorting the denominators $r_i+1$.  The lists for $m=5$ and $m=6$ in the $p=1$ case are truncated by the dimension bound stated in the captions; the exact total counts are given in \Cref{sec:classification}.

\subsection{The \texorpdfstring{$p=1$}{p=1} families}

\begin{longtable}{@{}r l r r r l@{}}
\caption{$m=2$, $k=1$, $p=1$.}\label{tab:m2p1}\\
\toprule
No. & Type & $D$ & $s$ & $h$ & $(x_i)[s]$\\
\midrule
\endfirsthead
\toprule
No. & Type & $D$ & $s$ & $h$ & $(x_i)[s]$\\
\midrule
\endhead
1 & $\Ber^{(1)}(1,1)$ & 3 & 2 & 4 & $(1,1)[2]$ \\
\bottomrule
\end{longtable}

\begin{longtable}{@{}r l r r r l@{}}
\caption{$m=3$, $k=2$, $p=1$: the three exceptional affine ADE star cases.}\label{tab:m3p1}\\
\toprule
No. & Type & $D$ & $s$ & $h$ & $(x_i)[s]$\\
\midrule
\endfirsthead
\toprule
No. & Type & $D$ & $s$ & $h$ & $(x_i)[s]$\\
\midrule
\endhead
1 & $\Ber^{(1)}(2,2,2)$ & 7 & 3 & 12 & $(1,1,1)[3]$ \\
2 & $\Ber^{(1)}(1,3,3)$ & 8 & 4 & 18 & $(2,1,1)[4]$ \\
3 & $\Ber^{(1)}(1,2,5)$ & 9 & 6 & 30 & $(3,2,1)[6]$ \\
\bottomrule
\end{longtable}

\begin{longtable}{@{}r l r r r l@{}}
\caption{$m=4$, $k=3$, $p=1$: all $14$ unordered solutions.}\label{tab:m4p1}\\
\toprule
No. & Type & $D$ & $s$ & $h$ & $(x_i)[s]$\\
\midrule
\endfirsthead
\toprule
No. & Type & $D$ & $s$ & $h$ & $(x_i)[s]$\\
\midrule
\endhead
1 & $\Ber^{(1)}(3,3,3,3)$ & 13 & 4 & 28 & $(1,1,1,1)[4]$ \\
2 & $\Ber^{(1)}(2,3,3,5)$ & 14 & 12 & 90 & $(4,3,3,2)[12]$ \\
3 & $\Ber^{(1)}(2,2,5,5)$ & 15 & 6 & 48 & $(2,2,1,1)[6]$ \\
4 & $\Ber^{(1)}(1,5,5,5)$ & 17 & 6 & 54 & $(3,1,1,1)[6]$ \\
5 & $\Ber^{(1)}(1,3,7,7)$ & 19 & 8 & 80 & $(4,2,1,1)[8]$ \\
6 & $\Ber^{(1)}(1,4,4,9)$ & 19 & 10 & 100 & $(5,2,2,1)[10]$ \\
7 & $\Ber^{(1)}(2,2,3,11)$ & 19 & 12 & 120 & $(4,4,3,1)[12]$ \\
8 & $\Ber^{(1)}(1,3,5,11)$ & 21 & 12 & 132 & $(6,3,2,1)[12]$ \\
9 & $\Ber^{(1)}(1,2,11,11)$ & 26 & 12 & 162 & $(6,4,1,1)[12]$ \\
10 & $\Ber^{(1)}(1,2,9,14)$ & 27 & 30 & 420 & $(15,10,3,2)[30]$ \\
11 & $\Ber^{(1)}(1,3,4,19)$ & 28 & 20 & 290 & $(10,5,4,1)[20]$ \\
12 & $\Ber^{(1)}(1,2,8,17)$ & 29 & 18 & 270 & $(9,6,2,1)[18]$ \\
13 & $\Ber^{(1)}(1,2,7,23)$ & 34 & 24 & 420 & $(12,8,3,1)[24]$ \\
14 & $\Ber^{(1)}(1,2,6,41)$ & 51 & 42 & 1092 & $(21,14,6,1)[42]$ \\
\bottomrule
\end{longtable}

\begin{longtable}{@{}r l r r r l@{}}
\caption{$m=5$, $k=4$, $p=1$: the $38$ solutions with $D\leq40$ from a total of $147$.}\label{tab:m5p1}\\
\toprule
No. & Type & $D$ & $s$ & $h$ & $(x_i)[s]$\\
\midrule
\endfirsthead
\toprule
No. & Type & $D$ & $s$ & $h$ & $(x_i)[s]$\\
\midrule
\endhead
1 & $\Ber^{(1)}(4,4,4,4,4)$ & 21 & 5 & 55 & $(1,1,1,1,1)[5]$ \\
2 & $\Ber^{(1)}(3,3,5,5,5)$ & 22 & 12 & 138 & $(3,3,2,2,2)[12]$ \\
3 & $\Ber^{(1)}(2,5,5,5,5)$ & 23 & 6 & 72 & $(2,1,1,1,1)[6]$ \\
4 & $\Ber^{(1)}(3,3,3,7,7)$ & 24 & 8 & 100 & $(2,2,2,1,1)[8]$ \\
5 & $\Ber^{(1)}(3,3,4,4,9)$ & 24 & 20 & 250 & $(5,5,4,4,2)[20]$ \\
6 & $\Ber^{(1)}(2,3,5,7,7)$ & 25 & 24 & 312 & $(8,6,4,3,3)[24]$ \\
7 & $\Ber^{(1)}(2,4,4,5,9)$ & 25 & 30 & 390 & $(10,6,6,5,3)[30]$ \\
8 & $\Ber^{(1)}(3,3,3,5,11)$ & 26 & 12 & 162 & $(3,3,3,2,1)[12]$ \\
9 & $\Ber^{(1)}(2,3,5,5,11)$ & 27 & 12 & 168 & $(4,3,2,2,1)[12]$ \\
10 & $\Ber^{(1)}(2,2,8,8,8)$ & 29 & 9 & 135 & $(3,3,1,1,1)[9]$ \\
11 & $\Ber^{(1)}(2,4,4,4,14)$ & 29 & 15 & 225 & $(5,3,3,3,1)[15]$ \\
12 & $\Ber^{(1)}(1,7,7,7,7)$ & 30 & 8 & 124 & $(4,1,1,1,1)[8]$ \\
13 & $\Ber^{(1)}(2,2,7,7,11)$ & 30 & 24 & 372 & $(8,8,3,3,2)[24]$ \\
14 & $\Ber^{(1)}(2,3,3,11,11)$ & 31 & 12 & 192 & $(4,3,3,1,1)[12]$ \\
15 & $\Ber^{(1)}(1,5,8,8,8)$ & 31 & 18 & 288 & $(9,3,2,2,2)[18]$ \\
16 & $\Ber^{(1)}(2,2,5,11,11)$ & 32 & 12 & 198 & $(4,4,2,1,1)[12]$ \\
17 & $\Ber^{(1)}(1,5,7,7,11)$ & 32 & 24 & 396 & $(12,4,3,3,2)[24]$ \\
18 & $\Ber^{(1)}(2,3,3,9,14)$ & 32 & 60 & 990 & $(20,15,15,6,4)[60]$ \\
19 & $\Ber^{(1)}(1,4,9,9,9)$ & 33 & 10 & 170 & $(5,2,1,1,1)[10]$ \\
20 & $\Ber^{(1)}(1,6,6,6,13)$ & 33 & 14 & 238 & $(7,2,2,2,1)[14]$ \\
21 & $\Ber^{(1)}(3,3,3,4,19)$ & 33 & 20 & 340 & $(5,5,5,4,1)[20]$ \\
22 & $\Ber^{(1)}(2,2,5,9,14)$ & 33 & 30 & 510 & $(10,10,5,3,2)[30]$ \\
23 & $\Ber^{(1)}(1,5,5,11,11)$ & 34 & 12 & 210 & $(6,2,2,1,1)[12]$ \\
24 & $\Ber^{(1)}(2,3,3,8,17)$ & 34 & 36 & 630 & $(12,9,9,4,2)[36]$ \\
25 & $\Ber^{(1)}(2,3,4,5,19)$ & 34 & 60 & 1050 & $(20,15,12,10,3)[60]$ \\
26 & $\Ber^{(1)}(2,2,5,8,17)$ & 35 & 18 & 324 & $(6,6,3,2,1)[18]$ \\
27 & $\Ber^{(1)}(1,5,5,9,14)$ & 35 & 30 & 540 & $(15,5,5,3,2)[30]$ \\
28 & $\Ber^{(1)}(2,2,4,14,14)$ & 37 & 15 & 285 & $(5,5,3,1,1)[15]$ \\
29 & $\Ber^{(1)}(1,5,5,8,17)$ & 37 & 18 & 342 & $(9,3,3,2,1)[18]$ \\
30 & $\Ber^{(1)}(2,2,6,6,20)$ & 37 & 21 & 399 & $(7,7,3,3,1)[21]$ \\
31 & $\Ber^{(1)}(1,3,11,11,11)$ & 38 & 12 & 234 & $(6,3,1,1,1)[12]$ \\
32 & $\Ber^{(1)}(2,3,3,7,23)$ & 39 & 24 & 480 & $(8,6,6,3,1)[24]$ \\
33 & $\Ber^{(1)}(1,4,5,14,14)$ & 39 & 30 & 600 & $(15,6,5,2,2)[30]$ \\
34 & $\Ber^{(1)}(1,4,7,7,19)$ & 39 & 40 & 800 & $(20,8,5,5,2)[40]$ \\
35 & $\Ber^{(1)}(1,5,6,6,20)$ & 39 & 42 & 840 & $(21,7,6,6,2)[42]$ \\
36 & $\Ber^{(1)}(1,3,9,11,14)$ & 39 & 60 & 1200 & $(30,15,6,5,4)[60]$ \\
37 & $\Ber^{(1)}(2,2,4,11,19)$ & 39 & 60 & 1200 & $(20,20,12,5,3)[60]$ \\
38 & $\Ber^{(1)}(2,2,5,7,23)$ & 40 & 24 & 492 & $(8,8,4,3,1)[24]$ \\
\bottomrule
\end{longtable}

\begin{longtable}{@{}r l r r r l@{}}
\caption{$m=6$, $k=5$, $p=1$: the $21$ solutions with $D\leq40$ from a total of $3462$.}\label{tab:m6p1}\\
\toprule
No. & Type & $D$ & $s$ & $h$ & $(x_i)[s]$\\
\midrule
\endfirsthead
\toprule
No. & Type & $D$ & $s$ & $h$ & $(x_i)[s]$\\
\midrule
\endhead
1 & $\Ber^{(1)}(5,5,5,5,5,5)$ & 31 & 6 & 96 & $(1,1,1,1,1,1)[6]$ \\
2 & $\Ber^{(1)}(3,5,5,5,7,7)$ & 33 & 24 & 408 & $(6,4,4,4,3,3)[24]$ \\
3 & $\Ber^{(1)}(4,4,5,5,5,9)$ & 33 & 30 & 510 & $(6,6,5,5,5,3)[30]$ \\
4 & $\Ber^{(1)}(3,3,7,7,7,7)$ & 35 & 8 & 144 & $(2,2,1,1,1,1)[8]$ \\
5 & $\Ber^{(1)}(4,4,4,4,9,9)$ & 35 & 10 & 180 & $(2,2,2,2,1,1)[10]$ \\
6 & $\Ber^{(1)}(3,5,5,5,5,11)$ & 35 & 12 & 216 & $(3,2,2,2,2,1)[12]$ \\
7 & $\Ber^{(1)}(3,4,4,7,7,9)$ & 35 & 40 & 720 & $(10,8,8,5,5,4)[40]$ \\
8 & $\Ber^{(1)}(2,5,7,7,7,7)$ & 36 & 24 & 444 & $(8,4,3,3,3,3)[24]$ \\
9 & $\Ber^{(1)}(3,3,5,8,8,8)$ & 36 & 36 & 666 & $(9,9,6,4,4,4)[36]$ \\
10 & $\Ber^{(1)}(2,5,5,8,8,8)$ & 37 & 18 & 342 & $(6,3,3,2,2,2)[18]$ \\
11 & $\Ber^{(1)}(3,3,5,7,7,11)$ & 37 & 24 & 456 & $(6,6,4,3,3,2)[24]$ \\
12 & $\Ber^{(1)}(4,4,4,5,5,14)$ & 37 & 30 & 570 & $(6,6,6,5,5,2)[30]$ \\
13 & $\Ber^{(1)}(3,4,4,5,9,11)$ & 37 & 60 & 1140 & $(15,12,12,10,6,5)[60]$ \\
14 & $\Ber^{(1)}(3,3,4,9,9,9)$ & 38 & 20 & 390 & $(5,5,4,2,2,2)[20]$ \\
15 & $\Ber^{(1)}(2,5,5,7,7,11)$ & 38 & 24 & 468 & $(8,4,4,3,3,2)[24]$ \\
16 & $\Ber^{(1)}(3,3,6,6,6,13)$ & 38 & 28 & 546 & $(7,7,4,4,4,2)[28]$ \\
17 & $\Ber^{(1)}(3,3,5,5,11,11)$ & 39 & 12 & 240 & $(3,3,2,2,1,1)[12]$ \\
18 & $\Ber^{(1)}(2,4,5,9,9,9)$ & 39 & 30 & 600 & $(10,6,5,3,3,3)[30]$ \\
19 & $\Ber^{(1)}(2,5,6,6,6,13)$ & 39 & 42 & 840 & $(14,7,6,6,6,3)[42]$ \\
20 & $\Ber^{(1)}(2,5,5,5,11,11)$ & 40 & 12 & 246 & $(4,2,2,2,1,1)[12]$ \\
21 & $\Ber^{(1)}(3,3,5,5,9,14)$ & 40 & 60 & 1230 & $(15,15,10,10,6,4)[60]$ \\
\bottomrule
\end{longtable}

\subsection{The \texorpdfstring{$p=2$}{p=2} families and the \texorpdfstring{$\tau$}{tau}-product}

At the level of denominator multisets, the operation
\begin{equation}
        \Ber_{\mathbf r}^{(p)}\,\tau\,\Ber_{\mathbf r'}^{(p')}
        =\Ber_{\mathbf r\cup\mathbf r'}^{(p+p')}
\label{eq:tau-product}
\end{equation}
corresponds to multiset union of arms.  In matrix terms the number of arms and the central weights add in such a way that the affine unit-fraction equation is preserved.  This explains why many early $p=2$ examples are composites of $p=1$ solutions.  A finer classification may therefore distinguish $\tau$-primitive weighted stars from decomposable ones.

\begin{longtable}{@{}r l r r r l@{}}
\caption{$m=4$, $k=2$, $p=2$: the classical affine $D_4^{(1)}$ star.}\label{tab:m4p2}\\
\toprule
No. & Type & $D$ & $s$ & $h$ & $(x_i)[s]$\\
\midrule
\endfirsthead
\toprule
No. & Type & $D$ & $s$ & $h$ & $(x_i)[s]$\\
\midrule
\endhead
1 & $D_4^{(1)}\simeq \Ber^{(2)}(1,1,1,1)$ & 5 & 2 & 6 & $(1,1,1,1)[2]$ \\
\bottomrule
\end{longtable}

\begin{longtable}{@{}r l r r r l@{}}
\caption{$m=5$, $k=3$, $p=2$: all $3$ unordered solutions.}\label{tab:m5p2}\\
\toprule
No. & Type & $D$ & $s$ & $h$ & $(x_i)[s]$\\
\midrule
\endfirsthead
\toprule
No. & Type & $D$ & $s$ & $h$ & $(x_i)[s]$\\
\midrule
\endhead
1 & $\Ber^{(2)}(1,1,2,2,2)$ & 9 & 6 & 30 & $(3,3,2,2,2)[6]$ \\
2 & $\Ber^{(2)}(1,1,1,3,3)$ & 10 & 4 & 22 & $(2,2,2,1,1)[4]$ \\
3 & $\Ber^{(2)}(1,1,1,2,5)$ & 11 & 6 & 36 & $(3,3,3,2,1)[6]$ \\
\bottomrule
\end{longtable}

\begin{longtable}{@{}r l r r r l@{}}
\caption{$m=6$, $k=4$, $p=2$: all $17$ unordered solutions.}\label{tab:m6p2}\\
\toprule
No. & Type & $D$ & $s$ & $h$ & $(x_i)[s]$\\
\midrule
\endfirsthead
\toprule
No. & Type & $D$ & $s$ & $h$ & $(x_i)[s]$\\
\midrule
\endhead
1 & $\Ber^{(2)}(2,2,2,2,2,2)$ & 13 & 3 & 21 & $(1,1,1,1,1,1)[3]$ \\
2 & $\Ber^{(2)}(1,2,2,2,3,3)$ & 14 & 12 & 90 & $(6,4,4,4,3,3)[12]$ \\
3 & $\Ber^{(2)}(1,1,3,3,3,3)$ & 15 & 4 & 32 & $(2,2,1,1,1,1)[4]$ \\
4 & $\Ber^{(2)}(1,2,2,2,2,5)$ & 15 & 6 & 48 & $(3,2,2,2,2,1)[6]$ \\
5 & $\Ber^{(2)}(1,1,2,3,3,5)$ & 16 & 12 & 102 & $(6,6,4,3,3,2)[12]$ \\
6 & $\Ber^{(2)}(1,1,2,2,5,5)$ & 17 & 6 & 54 & $(3,3,2,2,1,1)[6]$ \\
7 & $\Ber^{(2)}(1,1,1,5,5,5)$ & 19 & 6 & 60 & $(3,3,3,1,1,1)[6]$ \\
8 & $\Ber^{(2)}(1,1,1,3,7,7)$ & 21 & 8 & 88 & $(4,4,4,2,1,1)[8]$ \\
9 & $\Ber^{(2)}(1,1,1,4,4,9)$ & 21 & 10 & 110 & $(5,5,5,2,2,1)[10]$ \\
10 & $\Ber^{(2)}(1,1,2,2,3,11)$ & 21 & 12 & 132 & $(6,6,4,4,3,1)[12]$ \\
11 & $\Ber^{(2)}(1,1,1,3,5,11)$ & 23 & 12 & 144 & $(6,6,6,3,2,1)[12]$ \\
12 & $\Ber^{(2)}(1,1,1,2,11,11)$ & 28 & 12 & 174 & $(6,6,6,4,1,1)[12]$ \\
13 & $\Ber^{(2)}(1,1,1,2,9,14)$ & 29 & 30 & 450 & $(15,15,15,10,3,2)[30]$ \\
14 & $\Ber^{(2)}(1,1,1,3,4,19)$ & 30 & 20 & 310 & $(10,10,10,5,4,1)[20]$ \\
15 & $\Ber^{(2)}(1,1,1,2,8,17)$ & 31 & 18 & 288 & $(9,9,9,6,2,1)[18]$ \\
16 & $\Ber^{(2)}(1,1,1,2,7,23)$ & 36 & 24 & 444 & $(12,12,12,8,3,1)[24]$ \\
17 & $\Ber^{(2)}(1,1,1,2,6,41)$ & 53 & 42 & 1134 & $(21,21,21,14,6,1)[42]$ \\
\bottomrule
\end{longtable}
\section{Graphs}

\Cref{fig:general-graphs} shows the generic simply laced star-shaped graph and the symbolic fusion rule.  \Cref{fig:m4-graphs} displays the fourteen $m=4$, $p=1$ cases.

\begin{figure}[htbp]
\centering
\begin{tabular}{c}
\includegraphics[width=0.45\linewidth]{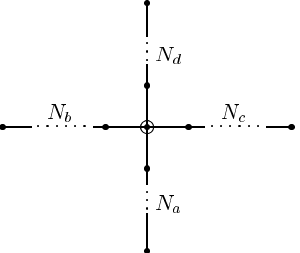}\\[0.8cm]
\includegraphics[width=0.70\linewidth]{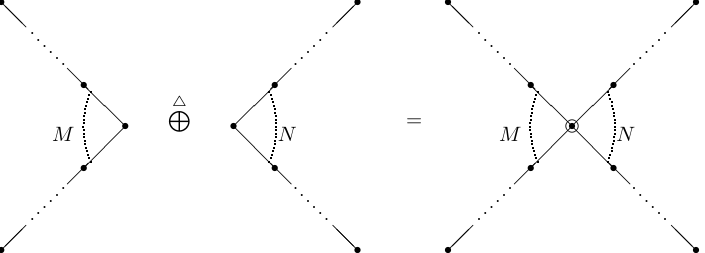}
\end{tabular}
\caption{Top: generic simply laced star-shaped weighted star graph. Bottom: symbolic form of the $\tau$-product on arm multisets.}
\label{fig:general-graphs}
\end{figure}

\begin{figure}[p]
\centering
\begin{tabular}{cccc}
\includegraphics[width=.21\linewidth]{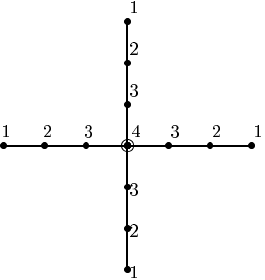} &
\includegraphics[width=.21\linewidth]{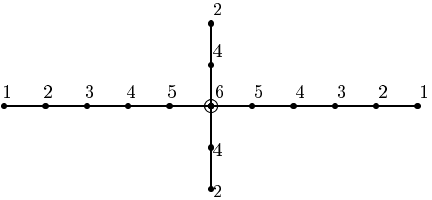} &
\includegraphics[width=.21\linewidth]{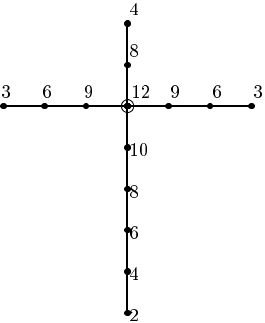} &
\includegraphics[width=.21\linewidth]{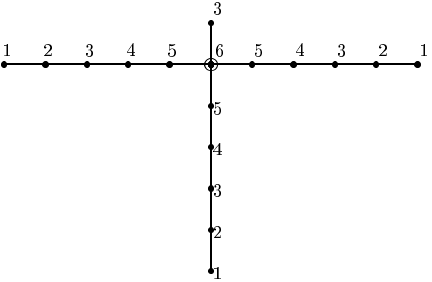} \\
\includegraphics[width=.21\linewidth]{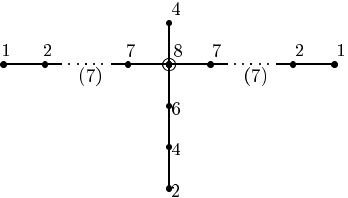} &
\includegraphics[width=.21\linewidth]{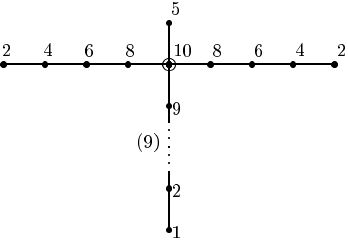} &
\includegraphics[width=.21\linewidth]{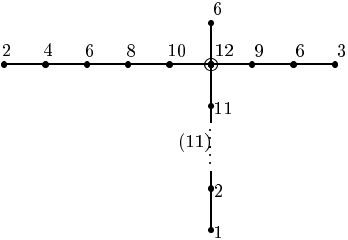} &
\includegraphics[width=.21\linewidth]{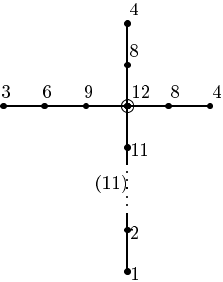} \\
\includegraphics[width=.21\linewidth]{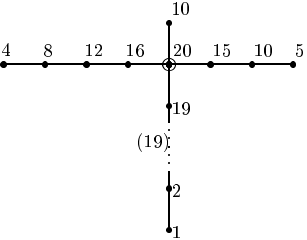} &
\includegraphics[width=.21\linewidth]{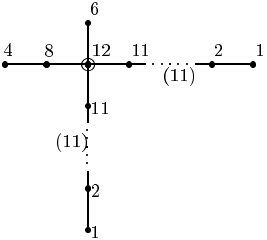} &
\includegraphics[width=.21\linewidth]{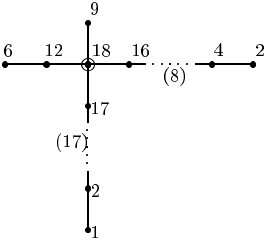} &
\includegraphics[width=.21\linewidth]{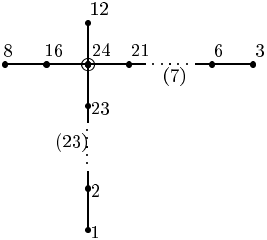}\\
\includegraphics[width=.21\linewidth]{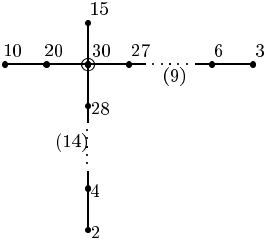} &
\includegraphics[width=.21\linewidth]{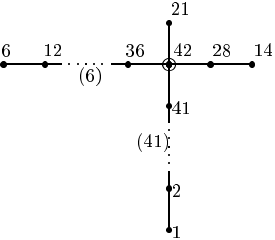} & &
\end{tabular}
\caption{The fourteen affine weighted star graphs for $m=4$, $k=3$, $p=1$.}
\label{fig:m4-graphs}
\end{figure}

\clearpage

\section{Conclusions}

This work has recast the Berger construction as a classification problem for integral symmetric matrices and weighted star graphs.  In this form the main object is not a conjectural algebra but the explicit $Z$-matrix $B(k;r_1,\ldots,r_m)$ associated with a star tree whose arms are type-$A$ paths and whose central vertex has weight $k$.

The main structural result is that the determinant, inertia and affine condition are all governed by the single Schur-complement scalar
\[
        k-m+\sum_{i=1}^m\frac{1}{r_i+1}.
\]
Thus the positive-definite, affine and one-negative-eigenvalue regimes are completely separated.  In the affine case the classification is exactly the Egyptian-fraction equation
\[
        \sum_{i=1}^m\frac{1}{r_i+1}=m-k,
\]
and the positive kernel vector is given explicitly by the least common multiple of the denominators.  

The Coxeter-label formula and the divisor partition \eqref{eq:divisor-partition} provide compact invariants for comparing the resulting weighted stars.

This matrix/graph perspective also clarifies the relation to known classifications.  When all diagonal entries are $2$, the spectral-radius classification of simply laced Dynkin and affine Dynkin diagrams is recovered.  When the central diagonal is allowed to vary, the same star-shaped support gives finite families for fixed $(m,k)$, enumerated by unit fractions.  The cases $D_4^{(1)}$, $E_6^{(1)}$, $E_7^{(1)}$ and $E_8^{(1)}$ are therefore not isolated phenomena but the first classical members of a broader weighted-star enumeration.

Several natural problems remain.  One can study $\tau$-primitive affine stars, extend the classification from simply laced stars to more general weighted trees, compare the resulting matrices with cyclotomic and symmetrizable integer-matrix classifications, and connect the unit-fraction constraints with quiver, canonical-algebra and weighted-projective-line classifications.  A separate direction is to investigate which of these graphs arise from singularity theory, McKay-type correspondences, weighted homogeneous singularities or toric Calabi--Yau constructions \citep{McKay1980,EbelingTakahashi2011}.  Those questions are deliberately separated from the present contribution: the results proved here are the linear-algebraic and combinatorial core needed before any stronger geometric or representation-theoretic interpretation can be justified.

\section*{Acknowledgements}

We acknowledge support from  the Spanish MEC-CYCIT and Fundacion Seneca (CARM Murcia) funding agencies and, at some stages of this work, hospitality from the CERN Theory Division and the Department of Theoretical Physics C-XI of the Universidad Aut\'onoma de Madrid.

\end{document}